\newcommand{\RR}{\mathbb{R}}
\newcommand{\prox}{{\mathbf{prox}}}
\newcommand{\cS}{{\mathcal{S}}}
\newcommand{\cC}{{\mathcal{C}}}
\newtheorem{assumption}{Assumption}[section]
\newtheorem{theorem}{Theorem}[section]
\newtheorem{lemma}{Lemma}[section]
\newtheorem{definition}{Definition}[section]
\newtheorem{remark}{Remark}[section]
\newtheorem{fact}{Fact}[section]
\begin{document}

\title{On proximal gradient mapping and its minimization in norm via potential function-based acceleration}

\author{Beier Chen \thanks{
        Department of Mathematics, National University of Defense Technology,
        Changsha, Hunan 410073, China.  Email: \texttt{chenbeier18@nudt.edu.cn}
    }
    \and Hui Zhang \thanks{Department of Mathematics, National University of Defense Technology,
        Changsha, Hunan 410073, China.  Email: \texttt{h.zhang1984@163.com}
    }
}

\date{\today}

\maketitle

\begin{abstract}
    The proximal gradient descent method, well-known for composite optimization, can be completely described by the concept of proximal gradient mapping. In this paper, we highlight our previous two discoveries of proximal gradient mapping--norm monotonicity and refined descent, with which we are able to extend the recently proposed potential function-based framework from gradient descent to proximal gradient descent.
\end{abstract}

\noindent\textbf{Keywords.} potential function-based framework, proximal gradient mapping, acceleration, proximal gradient method, composite optimization

\noindent\textbf{AMS subject classifications.} 90C25, 90C33, 90C47


\section{Introduction}
First-order methods, which go back to 1847 with the work of Cauchy on the vanilla gradient descent, have recently revived a great deal of research interest due to their low iteration cost as well as low memory storage.
How to establish convergence criteria and determine convergence rates for a given first-order method heavily depends on the choice of optimality measures. The standard optimization literature on smooth convex first-order optimization mainly provides guarantees for  optimality gap (in terms of function value) and distance gap (between the iterate and the minimizer set). However, these optimality measures only have theoretical value but do not fit practical applications because the optimal function value and the minimizer set are usually unknown before applying first-order methods.

Due to the basic fact that minimizing a smooth convex function is equivalent to minimizing the norm of its gradient, a more practical alternative to the optimality gap and distance gap may be the norm of gradient. This fact was initially exploited by Nesterov in the work \cite{nesterov2012}, which argued that using the norm of gradient as an optimality measure is natural and more practical. There are many different potential function-based frameworks covering broad classes of first-order methods for providing optimality gap and distance gap guarantees, but not for the norm of gradient.
This absence motives the authors of \cite{Jelena2022} to introduce a novel potential function-based framework, with which they are able to address the problem of minimizing the norm of the gradient of a smooth convex function. As a natural development, we wonder whether their potential function-based framework can be extended to other types of first-order methods.

In this paper, we go a small step further along this direction by applying their potential function-based framework to composite optimization--minimizing the sum of a smooth convex function and a possibly nonsmooth convex function. To this end, we first revisit the proximal gradient mapping and highlight our previous two discoveries--norm monotonicity and refined descent; both of them may have an independent interest in their own. Then, built on the newly discovered properties and the potential function-based framework of \cite{Jelena2022}, we establish the sublinear convergence for the norm sequence of proximal gradient mapping. Moreover,  we construct a new potential function  to obtain faster convergence.

At the time of writing this paper, a closely related work \cite{li2022proximal}, posted on arXiv very recently, also addressed the problem of minimizing the proximal gradient mapping under the name of proximal subgradient norm minimization. Here, we would like to point out three main differences between this work and ours. First, the potential function-based frameworks are different: they followed the discrete Lyapunov function in \cite{2018Understanding} while we extended that in \cite{Jelena2022}. Second, the accelerated algorithmic schemes are different: they analyzed the faster iterative shrinkage-thresholding algorithm (FISTA) in \cite{Beck2009} while we run two iterative processes for acceleration. At last, the main results are different: they never used the norm  monotonicity of proximal gradient mapping so that their result on proximal subgradient norm minimization for ISTA seems suboptimal. Nevertheless, we believe that these two works have their own merits and complement each other.

The remainder of the paper is organized as follows. In Section 2, we present the basic notation and preliminary knowledge of different function classes, the proximal gradient method, and the potential function-based framework of \cite{Jelena2022}. In Section 3, we revisit the proximal gradient mapping and establish two new properties. In Section 4, we study the problem of minimizing the norm of proximal gradient mapping and show convergence results. Finally, section 5 gives some concluding remarks.

\section{Preliminaries and preliminary results}
In this paper, we restrict our attention to an arbitrary finite dimensional space $\RR^d$ associated with inner product $\langle \cdot, \cdot\rangle$ and norm  $\|\cdot\|:=\sqrt{\langle \cdot, \cdot\rangle}$. For a closed subset $Q\subseteq \RR^d$ and a point $x\in \RR^d$, we define by
$d(x,Q):=\inf_{y\in Q}\|x-y\|$ the distance function from $x$ to $Q$, and define the indicator function of $Q$ by
\begin{eqnarray*}
    \delta_Q(x):=
    \left\{\begin{array}{lll}
        0,       & \textrm{if} ~~x\in Q; \\
        +\infty, & \textrm{otherwise}.
    \end{array} \right.
\end{eqnarray*}

\subsection{Different classes of functions}
In order to introduce the class of smooth convex functions, we first give the definitions of convexity and  smoothness. There are several equivalent definitions of convexity; here we present the first-order definition of convexity in the following form:
\begin{equation}\label{eq:GI}
    (\forall x,y\in\mathbb{R}^n):\quad f(y)\geq f(x)+\left\langle \nabla f(x),y-x\right\rangle.
\end{equation}
The convexity of $f$ essentially says that the function $f$ can be lower bounded by a linear function; In contrast, the smoothness of $f$ actually says that the function $f$ can be upper bounded by a quadratic function, that is
\begin{equation}\label{eq:Lsmooth}
    (\forall x,y\in\mathbb{R}^n):\quad f(y)\leq f(x)+\left\langle \nabla f(x),y-x\right\rangle+\frac{L}{2}\|y-x\|^2,
\end{equation}
where $L>0$ is a constant.
A function is called smooth convex if the inequalities \eqref{eq:GI} and \eqref{eq:Lsmooth} hold at the same time; the class of smooth convex functions is denoted by $\mathcal{F}^{1,1}_L(\mathbb{R}^n)$.  Surprisingly, the convexity inequality \eqref{eq:GI} and the smoothness inequality \eqref{eq:Lsmooth} can be equivalently characterized by a single inequality, that is
\begin{equation}\label{unif1}
    (\forall x,y\in\mathbb{R}^n):\quad f(y)\geq f(x)+\left\langle \nabla f(x),y-x\right\rangle+\frac{1}{L}\|\nabla f(x)-\nabla f(y)\|^2,
\end{equation}
from which the convexity is obviously implied. Interestingly, the inequality \eqref{unif1} is also equivalent to the cocoercive  property of gradient, formulated as
\begin{equation}\label{unif1p}
    (\forall x,y\in\mathbb{R}^n):\quad  \left\langle \nabla f(x)-\nabla f(y), x-y \right\rangle \geq \frac{1}{2L}\|\nabla f(x)-\nabla f(y)\|^2.
\end{equation}

The fact of equivalence between the inequalities above was observed in the book \cite{nesterov2004introductory}. In order to describe a more general fact, we introduce the first-order definition of strong convexity in the form
\begin{equation}
    (\forall x,y\in\mathbb{R}^n):\quad f(y)\geq f(x)+\left\langle \nabla f(x),y-x\right\rangle+\frac{\mu}{2}\|y-x\|^2,
\end{equation}
where $\mu\geq 0$ is a constant, called modulus of strong convexity. In particular, for $\mu=0$ the strong convexity reduces to convexity. In this sense, strong convexity with constant $\mu$ is more general than convexity and hence a wider class of functions, denoted by $\mathcal{S}^{1,1}_{\mu,L}(\mathbb{R}^n)$ and called $L$-smooth and $\mu$-strongly convex, follows. As a matter of fact, we have
$$\mathcal{S}^{1,1}_{\mu=0,L}(\mathbb{R}^n)=\mathcal{F}^{1,1}_L(\mathbb{R}^n).$$
Now, the following statement extends the basic fact that convexity and smoothness is equivalent to \eqref{unif1} or \eqref{unif1p}; for more details please refer to \cite{2021New}.

\begin{fact}\label{fac:ML}
    Let $f:\RR^n\rightarrow \RR$ be a given real-valued function. Then, $f\in\mathcal{S}^{1,1}_{\mu,L}(\mathbb{R}^n)$ if and only if one of the following inequalities holds:
    \begin{equation}\label{unif2}
        (\forall x,y\in\mathbb{R}^n):\quad \left\langle \nabla f(x)-\nabla f(y), x-y \right\rangle \geq \frac{\mu L}{\mu+ L}\|x-y\|^2+\frac{1}{\mu+L}\|\nabla f(x)-\nabla f(y)\|^2,
    \end{equation}
    \begin{equation}\label{unif3}
        \begin{aligned}
            (\forall x,y\in\mathbb{R}^n):\quad  f(x)\geq & f(y)+\left\langle \nabla f(y), x-y\right\rangle+ \frac{1}{2L} \|\nabla f(x)-\nabla f(y)\|^2 \\
                                                         & +\frac{\mu L}{2(L-\mu)}\|x-y-\frac{1}{L}(\nabla f(x)-\nabla f(y))\|^2,
        \end{aligned}
    \end{equation}
    and
    \begin{equation}\label{unif4}
        \mu\|x-y\|\leq \|\nabla f(x)-\nabla f(y)\|\leq L\|x-y\|.
    \end{equation}
\end{fact}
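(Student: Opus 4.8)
The plan is to make inequality \eqref{unif2} the hub of the proof and to reduce the strongly convex, smooth case to the already-settled convex case \eqref{unif1}--\eqref{unif1p} by a single shift. Introduce $\phi(x):=f(x)-\frac{\mu}{2}\|x\|^2$, so that $\nabla\phi(x)=\nabla f(x)-\mu x$. My first step is to record the exact translation $f\in\mathcal{S}^{1,1}_{\mu,L}(\mathbb{R}^n)\iff\phi\in\mathcal{F}^{1,1}_{L-\mu}(\mathbb{R}^n)$: subtracting the quadratic $\frac{\mu}{2}\|x\|^2$ converts the $\mu$-strong convexity lower bound on $f$ into the plain convexity \eqref{eq:GI} of $\phi$, and at the same time lowers the smoothness modulus in \eqref{eq:Lsmooth} from $L$ to $L-\mu$; both directions are elementary and I would verify them first.

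With this dictionary in place, I would obtain \eqref{unif2} by writing the gradient cocoercivity \eqref{unif1p} for the $(L-\mu)$-smooth convex function $\phi$, namely $\langle\nabla\phi(x)-\nabla\phi(y),\,x-y\rangle\ge\frac{1}{L-\mu}\|\nabla\phi(x)-\nabla\phi(y)\|^2$, and then substituting $\nabla\phi=\nabla f-\mu\,\mathrm{id}$. Abbreviating $g:=\nabla f(x)-\nabla f(y)$ and $s:=x-y$, this reads $\langle g-\mu s,\,s\rangle\ge\frac{1}{L-\mu}\|g-\mu s\|^2$; expanding both sides, clearing the denominator, and collecting the $\langle g,s\rangle$, $\|g\|^2$ and $\|s\|^2$ contributions collapses everything to $\langle g,s\rangle\ge\frac{\mu L}{\mu+L}\|s\|^2+\frac{1}{\mu+L}\|g\|^2$, which is exactly \eqref{unif2}. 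Since each of these steps is reversible, running the computation backwards recovers the cocoercivity of $\phi$, hence $\phi\in\mathcal{F}^{1,1}_{L-\mu}$ and $f\in\mathcal{S}^{1,1}_{\mu,L}$; this closes $f\in\mathcal{S}^{1,1}_{\mu,L}\iff\eqref{unif2}$.

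For \eqref{unif3} I would reuse the same substitution, but starting from the function-value form \eqref{unif1} applied to $\phi$ with modulus $L-\mu$, i.e. $\phi(x)\ge\phi(y)+\langle\nabla\phi(y),x-y\rangle+\frac{1}{2(L-\mu)}\|\nabla\phi(x)-\nabla\phi(y)\|^2$. After replacing $\phi$ and $\nabla\phi$ and moving the quadratic term from the left, the pure quadratic terms combine through the identity $\frac{\mu}{2}\|x\|^2-\frac{\mu}{2}\|y\|^2-\mu\langle y,x-y\rangle=\frac{\mu}{2}\|s\|^2$, leaving $f(x)\ge f(y)+\langle\nabla f(y),x-y\rangle+\frac{\mu}{2}\|s\|^2+\frac{1}{2(L-\mu)}\|g-\mu s\|^2$; a completing-the-square identity then rewrites $\frac{\mu}{2}\|s\|^2+\frac{1}{2(L-\mu)}\|g-\mu s\|^2$ as $\frac{1}{2L}\|g\|^2+\frac{\mu L}{2(L-\mu)}\|s-\frac{1}{L}g\|^2$, which is precisely the right-hand side of \eqref{unif3}. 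Inequality \eqref{unif4} I would read off directly from \eqref{unif2}: bounding its left side by Cauchy--Schwarz gives $\|g\|\,\|s\|\ge\frac{\mu L}{\mu+L}\|s\|^2+\frac{1}{\mu+L}\|g\|^2$, and multiplying through by $\mu+L$ turns this into the factored form $(\|g\|-\mu\|s\|)(\|g\|-L\|s\|)\le0$, i.e. $\mu\|s\|\le\|g\|\le L\|s\|$.

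The main obstacle I anticipate is twofold. On the computational side, \eqref{unif3} is sensitive to bookkeeping: the factor $\frac{\mu L}{2(L-\mu)}$ and the precise argument $x-y-\frac{1}{L}(\nabla f(x)-\nabla f(y))$ only appear after the completing-the-square step is carried out with the correct constants, so the cross terms must be tracked carefully (and this is also where the role of the modulus $L-\mu$, rather than $L$, is essential). On the logical side, the reverse direction for \eqref{unif4} is delicate, since the two-sided norm bound by itself does not encode monotonicity of $\nabla f$; accordingly I would not try to close the equivalence cycle through \eqref{unif4} in isolation, but instead establish each ``if'' and ``only if'' by passing through the central characterization \eqref{unif2}, using the convexity already built into the strong-convexity hypothesis. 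Once the shift reduction and the convex-case inequalities \eqref{unif1}--\eqref{unif1p} are invoked, the remaining work is just the two expansions above.
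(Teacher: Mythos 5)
Your proposal cannot be compared line-by-line with a proof in the paper, because the paper never proves Fact \ref{fac:ML}: it is quoted from \cite{2021New} with a pointer ``for more details please refer to...''. So what you wrote is a self-contained substitute, and its core is sound. The shift dictionary $f\in\mathcal{S}^{1,1}_{\mu,L}(\mathbb{R}^n)\iff\phi\in\mathcal{F}^{1,1}_{L-\mu}(\mathbb{R}^n)$ for $\phi=f-\frac{\mu}{2}\|\cdot\|^2$ is correct, and both of your computations check out: expanding $\langle g-\mu s,s\rangle\ge\frac{1}{L-\mu}\|g-\mu s\|^2$ does collapse to \eqref{unif2}, and the completing-the-square identity $\frac{\mu}{2}\|s\|^2+\frac{1}{2(L-\mu)}\|g-\mu s\|^2=\frac{1}{2L}\|g\|^2+\frac{\mu L}{2(L-\mu)}\|s-\frac{1}{L}g\|^2$ is exact, so \eqref{unif3} follows and both reductions are reversible. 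Two caveats you should make explicit. First, you silently use the textbook constants ($\frac{1}{2L}$ in the function-value inequality, $\frac{1}{L}$ in cocoercivity), whereas the paper's displayed \eqref{unif1} and \eqref{unif1p} have these two constants interchanged (a typo, as one sees by setting $\mu=0$ in \eqref{unif2} or \eqref{unif3}); your choice is the right one, but since you cite \eqref{unif1}--\eqref{unif1p} as your starting point, you must note that you are correcting them, otherwise the bookkeeping in \eqref{unif3} would not come out. Second, $L-\mu$ appears in your denominators, so the degenerate case $\mu=L$ has to be excluded or handled separately, just as \eqref{unif3} itself implicitly assumes $\mu<L$.

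The genuine gap is \eqref{unif4}. Your Cauchy--Schwarz argument cleanly gives $f\in\mathcal{S}^{1,1}_{\mu,L}(\mathbb{R}^n)\Rightarrow\eqref{unif4}$ via the factorization $(\|g\|-\mu\|s\|)(\|g\|-L\|s\|)\le 0$, but the Fact asserts an equivalence, and the converse $\eqref{unif4}\Rightarrow f\in\mathcal{S}^{1,1}_{\mu,L}(\mathbb{R}^n)$ is precisely what your workaround does not deliver: ``passing through \eqref{unif2} using the convexity already built into the strong-convexity hypothesis'' is circular, because in that direction \eqref{unif4} is the \emph{only} hypothesis and convexity is not available. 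Indeed, as literally stated the converse is false: $f(x)=-\frac{c}{2}\|x\|^2$ with $\mu\le c\le L$, $c>0$, satisfies \eqref{unif4} (its gradient differences have norm exactly $c\|x-y\|$) yet is concave, hence not in $\mathcal{S}^{1,1}_{\mu,L}(\mathbb{R}^n)$. So you were right to sense trouble, but flagging the difficulty and routing around it is not the same as closing it: a complete proof must either (i) read the \eqref{unif4} part of the Fact with a standing convexity assumption on $f$ (which is presumably the setting of \cite{2021New}) and then actually prove that, for convex $f$, the lower bound $\mu\|x-y\|\le\|\nabla f(x)-\nabla f(y)\|$ forces $\mu$-strong convexity --- a nontrivial step your sketch does not contain --- or (ii) state explicitly that for \eqref{unif4} only the forward implication is being established, which weakens the Fact as claimed.
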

At last, we let $\Gamma_0(\mathbb{R}^n)$ be the class of proper closed and convex functions from $\mathbb{R}^n$ to $(-\infty,+\infty]$. For any $g\in \Gamma_0(\mathbb{R}^n)$, its subdifferential at $x$ is given by
$$\partial g (x):= \{ y \in \RR^n: g(u)\geq g(x)+ \langle y, u-x\rangle,\quad \forall u\in \RR^n \}.$$
The inequality $g(u)\geq g(x)+ \langle y, u-x\rangle$ is called subgradient inequality, each vector in $\partial g(x)$ is called a subgradient of $g$ at $x$.

\subsection{The proximal gradient method}
The proximal gradient method, also called the forward-backward splitting method, is a well-known method for minimizing the sum of a smooth function and a non-smooth function. In the paper, we will be concerned with the following composite optimization
\begin{equation}\label{eg:CO}
    \min_{x\in\mathbb{R}^n}  \varphi(x):=f(x)+g(x),
\end{equation}
where we assume the following.
\begin{assumption} [Composite model assumption] \label{assum1}
    The component functions $f$ and $g$ satisfy that
    \begin{enumerate}
        \item[(A)] $f\in\mathcal{S}^{1,1}_{\mu,L}(\mathbb{R}^n)$, i.e., $f$ is a $L$-smooth and $\mu$-strongly convex function,
        \item[(B)] $g\in \Gamma_0(\mathbb{R}^n)$, i.e., $g$ is a proper closed convex function but it is possibly not smooth, and
        \item[(C)] $X^*$, the set of optimal solutions to \eqref{eg:CO}, is nonempty. The optimal value of the problem is denoted by $\bar{\varphi}$.
    \end{enumerate}
\end{assumption}

Before introducing the concrete iterative scheme of the proximal gradient method, we first give the definition of proximal gradient mapping.
\begin{definition}[PG mapping]\label{def:PGM}
    Suppose that $f$ and $g$ satisfy properties (A) and (B) of Assumption \ref{assum1}. Then the proximal gradient mapping is the operator $\mathcal{G}:\RR^n\times \RR_{+}\rightarrow \RR^n$ defined by
    \begin{equation}\label{eq:PGM}
        \mathcal{G}(x,t):=t^{-1}\left(x-\prox_{tg}(x-t\nabla f(x))\right),
    \end{equation}
    where $\prox_{tg}:\RR^n\rightarrow \RR^n$ is the proximal mapping given by
    $$\prox_{tg}(x):=\arg\min_{y\in\RR^n}\{g(y)+\frac{1}{2t}\|y-x\|^2\}.$$
    In particular, when $g=\delta_Q$, the proximal gradient mapping reduces to the gradient mapping in \cite{nesterov2004introductory}.
\end{definition}
Now, the proximal gradient method, originally given by
$$x^{k+1}=\prox_{t_kg}\left(x^k-t_k\cdot\nabla f(x^k)\right),$$
can be equivalently written into the following form
\begin{equation}\label{eq:PGMM}
    x^{k+1}=x^k-t_k\cdot\mathcal{G}(x^k,t_k).
\end{equation}

\subsection{The potential function-based framework}
The authors of \cite{Jelena2022} introduced a novel potential function-based framework to study the convergence of standard gradient-type methods for making the gradients small in smooth convex optimization. In this part, we first review how their method applies to the standard gradient descent for minimizing a smooth convex function $f\in \mathcal{F}^{1,1}_L(\mathbb{R}^n)$. The key ingredient is that they constructed a potential function of the form
$$\cC_k=\frac{k}{L}\|\nabla f(x^k)\|^2+f(x^k),$$
where the sequence $\{x^k\}_{k\geq 0}$ is generated by the standard gradient descent method, i.e.,
$$x^{k+1}=x^k-\frac{1}{L}\nabla f(x^k), ~~\forall k\geq 0.$$
By invoking the inequalities \eqref{unif1} or \eqref{unif1p}, they can show that the sequence $\{\cC_k\}_{k\geq 0}$ is nonincreasing with $k$ and hence can conclude that $\forall k\geq 0$
$$\|\nabla f(x^k)\|^2\leq \frac{2L(f(x^0)-f(x^*))}{2k+1},$$
where $x^0$ is an arbitrary initial point and $x^*$ is a minimizer of $f$.  In order to design a faster  method than the standard gradient descent, they considered a different potential function of the form
\begin{equation}\label{pfun}
    \cC_k=\sum_{i=0}^{k-1}a_i\|\nabla f(x^i)\|^2+B_k(f(x^k)-f(x^*)),
\end{equation}
where $a_i>0$ ($\forall i\geq0$) the sequence of scalars $B_k>0$ ($\forall k\geq 0$) is strictly increasing,
and the sequence $\{x^k\}_{k\geq 0}$ is generated by the following fast gradient method
\begin{equation}\label{eq:AG}\tag{FGM}
    \left\{\begin{aligned}
        v^k  := & v^{k-1}-\frac{b_{k-1}}{L}\cdot\nabla f(x^{k-1}),                                               \\
        x^k  := & \frac{B_{k-1}}{B_k}\left(x^{k-1}-\frac{1}{L}\cdot\nabla f(x^{k-1})\right) +\frac{b_k}{B_k}v^k,
    \end{aligned}\right.
\end{equation}
with a given arbitrary initial point $x^0$ and $v^0=x^0$. Under some restrictions on the parameters $a_i$ and $B_k$,  invoking again the inequalities \eqref{unif1} and \eqref{unif1p} they showed that
$$\cC_{k+1}-\cC_k\leq  \frac{L}{2}(\|x^*-v^k\|^2-\|x^*-v^{k+1}\|^2), \forall k\geq 0,$$
from which both convergences in function value and in norm of gradient can be obtained. As pointed out, their analysis is the first one that simultaneously leads to both convergence guarantees.

\section{New properties on proximal gradient mapping}
In this section, we first introduce three basic properties of proximal gradient mapping, whose proofs are postponed to Appendix. Then, we highlight two new properties, both of which were discovered in \cite{2019A} by the second author of this paper and posted on arXiv three years ago  but they have not yet been submitted for publication.
\subsection{Basic lemmas}
The first lemma is an equivalent characterization of the proximal mapping.
\begin{lemma}\label{lem:zero}
    Let $g\in\Gamma_0(\mathbb{R}^n)$ and $t>0$. Thus
    $z=\prox_{tg}(y)$
    if and only if $y\in(I+t\cdot\partial g)(z)$.
\end{lemma}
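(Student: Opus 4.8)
The plan is to recognize $z=\prox_{tg}(y)$ as the unique global minimizer of the auxiliary function
$$h(u):=g(u)+\frac{1}{2t}\|u-y\|^2,$$
and then to translate the optimality of $z$ into the inclusion $y\in(I+t\cdot\partial g)(z)$. First I would observe that $h$ is proper, closed, and strongly convex: the quadratic term supplies strong convexity while $g\in\Gamma_0(\RR^n)$ contributes properness and closedness. Hence a unique minimizer exists and $z=\prox_{tg}(y)$ is well defined. Since $h$ is convex, a point $z$ minimizes $h$ if and only if it satisfies the first-order optimality (Fermat) condition $0\in\partial h(z)$, so the whole lemma reduces to rewriting this condition.

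The clean way to do the rewriting is through the sum rule. Because the quadratic $u\mapsto\frac{1}{2t}\|u-y\|^2$ is finite-valued and differentiable everywhere, the Moreau--Rockafellar sum rule applies without any qualification constraint, giving $\partial h(z)=\partial g(z)+\frac{1}{t}(z-y)$. Therefore $0\in\partial h(z)$ is equivalent to $\frac{1}{t}(y-z)\in\partial g(z)$, i.e. $y-z\in t\cdot\partial g(z)$, which is exactly $y\in z+t\cdot\partial g(z)=(I+t\cdot\partial g)(z)$. This single chain of equivalences yields both implications simultaneously.

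To keep the argument self-contained and avoid invoking the sum rule, I would alternatively prove the two inclusions directly from the subgradient inequality. For the direction $y\in(I+t\cdot\partial g)(z)\Rightarrow z=\prox_{tg}(y)$, I would substitute $\frac{1}{t}(y-z)\in\partial g(z)$ into the subgradient inequality for $g$ and complete the square in $\frac{1}{2t}\|u-y\|^2$, obtaining the sharpened bound $h(u)\geq h(z)+\frac{1}{2t}\|u-z\|^2\geq h(z)$ for every $u$, which identifies $z$ as the minimizer. The converse is the main obstacle: starting from minimality, I would feed in $h(z+\lambda(u-z))\geq h(z)$ for $\lambda\in(0,1]$, use convexity of $g$ to bound $g(z+\lambda(u-z))\leq(1-\lambda)g(z)+\lambda g(u)$, divide the resulting inequality by $\lambda$, and let $\lambda\to0^+$ to recover $g(u)\geq g(z)+\frac{1}{t}\langle y-z,u-z\rangle$ for all $u$, i.e. $\frac{1}{t}(y-z)\in\partial g(z)$. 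The delicate points are the legitimacy of the one-sided limit and checking that the quadratic remainder (of order $\lambda$) vanishes as $\lambda\to0^+$; both are routine once the difference quotient is organized, but they are the only place where genuine care is needed.
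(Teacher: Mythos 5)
Your primary argument is exactly the paper's proof: identify $z=\prox_{tg}(y)$ as the minimizer of $u\mapsto g(u)+\frac{1}{2t}\|u-y\|^2$ and apply the first-order optimality condition, which via the sum rule (valid without qualification since the quadratic is finite and differentiable everywhere) reads $0\in\partial g(z)+\frac{1}{t}(z-y)$, i.e.\ $y\in(I+t\cdot\partial g)(z)$; the paper states this chain more tersely, leaving the sum rule and the two-sided nature of Fermat's condition implicit, but it is the same equivalence. Your supplementary self-contained route---sufficiency by completing the square to get $h(u)\geq h(z)+\frac{1}{2t}\|u-z\|^2$, necessity by the difference-quotient limit $\lambda\to 0^+$---is also correct and trades the appeal to the Moreau--Rockafellar sum rule for an elementary computation, which is a legitimate bonus but not needed to match the paper.
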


The second lemma provides the relationship between the norm of proximal gradient mapping and the smallest norm of  subgradient.
\begin{lemma}\label{lem:UB}
    Suppose that $f$ and $g$ satisfy properties (A) and (B) of Assumption \ref{assum1}. For any $x\in\mathbb{R}^n$ and $t>0$, we have
    \begin{equation}\label{eq:UB}
        \|\mathcal{G}(x,t)\|\leq d(0,\partial\varphi(x)).
    \end{equation}
\end{lemma}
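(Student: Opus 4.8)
The plan is to reduce everything to the monotonicity of the subdifferential of $g$. First, since $f$ is finite-valued and differentiable everywhere, the sum rule for subdifferentials applies and gives $\partial\varphi(x)=\nabla f(x)+\partial g(x)$; consequently
$$d(0,\partial\varphi(x))=\inf_{s\in\partial g(x)}\|\nabla f(x)+s\|.$$
If $\partial g(x)=\emptyset$ the right-hand side is $+\infty$ and the claim is vacuous, so we may fix an arbitrary $s\in\partial g(x)$ and aim to prove the pointwise bound $\|\mathcal{G}(x,t)\|\leq\|\nabla f(x)+s\|$; taking the infimum over $s$ at the end will finish the proof.

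Next I would extract a subgradient of $g$ directly from the defining prox step. Writing $z:=\prox_{tg}(x-t\nabla f(x))$ so that $\mathcal{G}(x,t)=t^{-1}(x-z)$, Lemma \ref{lem:zero} yields $x-t\nabla f(x)\in(I+t\,\partial g)(z)$, which after dividing by $t$ rearranges to
$$\mathcal{G}(x,t)-\nabla f(x)\in\partial g(z).$$
Thus I now have two subgradients of the convex function $g$ in hand: one at $z$, namely $\mathcal{G}(x,t)-\nabla f(x)$, and one at $x$, namely $s$.

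The core step is to play these two subgradients against each other using monotonicity of $\partial g$, together with the identity $z-x=-t\,\mathcal{G}(x,t)$. Monotonicity gives
$$\langle(\mathcal{G}(x,t)-\nabla f(x))-s,\;z-x\rangle\geq 0,$$
and substituting $z-x=-t\,\mathcal{G}(x,t)$ and then dividing by $-t<0$ (which reverses the inequality) turns this into
$$\|\mathcal{G}(x,t)\|^2\leq\langle\nabla f(x)+s,\;\mathcal{G}(x,t)\rangle.$$
A single application of Cauchy--Schwarz then gives $\|\mathcal{G}(x,t)\|^2\leq\|\nabla f(x)+s\|\,\|\mathcal{G}(x,t)\|$, and dividing by $\|\mathcal{G}(x,t)\|$ (the case $\mathcal{G}(x,t)=0$ being trivial) yields the desired pointwise inequality. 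Taking the infimum over all $s\in\partial g(x)$ completes the argument.

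I do not expect a genuine obstacle here: the only points requiring a little care are the validity of the sum rule (guaranteed because $f$ is everywhere differentiable), the degenerate cases of an empty subdifferential and of a vanishing mapping, and keeping the sign straight when dividing the monotonicity inequality by $-t$. It is worth noting that the argument uses only the convexity of $g$ and the differentiability of $f$, so neither smoothness nor strong convexity of $f$ is actually needed for this particular bound.
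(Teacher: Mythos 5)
Your proof is correct, but it takes a genuinely different route from the paper's. The paper also starts from Lemma \ref{lem:zero} and the sum rule $\partial\varphi(x)=\nabla f(x)+\partial g(x)$, but instead of extracting a subgradient of $g$ at the updated point, it observes that for any $s\in\partial\varphi(x)$ one has $x-t\nabla f(x)+ts\in (I+t\,\partial g)(x)$, hence $x=\prox_{tg}(x-t\nabla f(x)+ts)$, i.e., $x$ is itself a proximal image of a shifted input; the bound
$t\|\mathcal{G}(x,t)\|=\|\prox_{tg}(x-t\nabla f(x)+ts)-\prox_{tg}(x-t\nabla f(x))\|\le t\|s\|$
then follows from nonexpansiveness of $\prox_{tg}$, invoked as a known property. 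Your argument never forms this fixed-point identity; instead you pit the subgradient $\mathcal{G}(x,t)-\nabla f(x)\in\partial g(z)$ against $s\in\partial g(x)$ via monotonicity and finish with Cauchy--Schwarz. In effect you have reproved, in this special case, the firm nonexpansiveness of the proximal mapping: your intermediate inequality $\|\mathcal{G}(x,t)\|^2\le\langle\nabla f(x)+s,\mathcal{G}(x,t)\rangle$ is exactly the firm-nonexpansiveness inequality applied to the two inputs $x-t\nabla f(x)+ts$ and $x-t\nabla f(x)$, whose proximal images are $x$ and $z$ respectively. What each approach buys: the paper's proof is shorter once nonexpansiveness is taken as a black box, while yours is fully self-contained (only monotonicity of $\partial g$ plus Cauchy--Schwarz) and delivers a slightly stronger inner-product bound than \eqref{eq:UB} as a byproduct. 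Your handling of the degenerate cases ($\partial g(x)=\emptyset$ and $\mathcal{G}(x,t)=0$) and your closing observation that neither $L$-smoothness nor strong convexity of $f$ is actually needed are both accurate; the paper's proof shares that latter feature, though it does not say so.
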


The last lemma is a slight modification of the classic descent lemma, originally discovered by Beck and Teboulle in \cite{Beck2009}. It also extends Corollary 2.3.2 in \cite{nesterov2004introductory} from gradient mapping to proximal gradient mapping. When $\mu=0$, it reduces to the  pivotal inequality in the recent work \cite{li2022proximal}.
\begin{lemma}\label{lem:OVG}
    Suppose that $f$ and $g$ satisfy properties (A) and (B) of Assumption \ref{assum1}. Then, we have
    \begin{equation}\label{eq:OVG}
        \varphi(x)-\varphi(y-t\mathcal{G}(y,t))\geq t(1-\frac{L}{2}t)\| \mathcal{G}(y,t)\|^2+\left\langle\mathcal{G}(y,t),x-y\right\rangle+\frac{\mu}{2}\|x-y\|^2.
    \end{equation}
    In particular, the inequality above with $t=\frac{1}{L}$ and $\mu=0$ in (\ref{eq:OVG}) yields
    \begin{equation}\label{eq:OVGL}
        \varphi(x)-\varphi(y-\frac{1}{L}\mathcal{G}(y,\frac{1}{L}))\geq \frac{1}{2L}\|\mathcal{G}(y,\frac{1}{L})\|^2+\left\langle\mathcal{G}(y,\frac{1}{L}), x-y\right\rangle.
    \end{equation}
\end{lemma}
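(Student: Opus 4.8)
The plan is to reduce the claim to one subgradient inequality together with the descent lemma, by exploiting the optimality condition that defines the proximal step. Write $y^+:=y-t\mathcal{G}(y,t)$ for the proximal gradient step taken from $y$; by Definition \ref{def:PGM} this is exactly $y^+=\prox_{tg}(y-t\nabla f(y))$. The first move is to apply Lemma \ref{lem:zero} with the point $w:=y-t\nabla f(y)$: since $y^+=\prox_{tg}(w)$ we have $w\in(I+t\partial g)(y^+)$, that is, $t^{-1}(w-y^+)\in\partial g(y^+)$. Because $w-y^+=(y-t\nabla f(y))-(y-t\mathcal{G}(y,t))=t(\mathcal{G}(y,t)-\nabla f(y))$, this pins down the explicit subgradient
\[
s:=\mathcal{G}(y,t)-\nabla f(y)\in\partial g(y^+).
\]
Identifying this particular element of $\partial g(y^+)$ is the conceptual crux of the argument, and I regard it as the main obstacle: an arbitrary subgradient would not lead to the cancellations below.

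Next I would split $\varphi=f+g$ and bound the two pieces of $\varphi(x)-\varphi(y^+)$ separately. For the nonsmooth part, the subgradient inequality for $g$ at $y^+$ evaluated with $s$ gives $g(x)-g(y^+)\ge\langle s,x-y^+\rangle$. For the smooth part I would use both halves of $f\in\mathcal{S}^{1,1}_{\mu,L}(\mathbb{R}^n)$: the smoothness inequality \eqref{eq:Lsmooth} applied at $y$ (using $y^+-y=-t\mathcal{G}(y,t)$) to upper bound $f(y^+)$, and the strong convexity inequality applied at $y$ to lower bound $f(x)$. Subtracting these two bounds yields
\[
f(x)-f(y^+)\ge\langle\nabla f(y),x-y\rangle+\tfrac{\mu}{2}\|x-y\|^2+t\langle\nabla f(y),\mathcal{G}(y,t)\rangle-\tfrac{L}{2}t^2\|\mathcal{G}(y,t)\|^2.
\]

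Finally I would add the two bounds and expand $\langle s,x-y^+\rangle=\langle\mathcal{G}(y,t)-\nabla f(y),\,x-y+t\mathcal{G}(y,t)\rangle$. By the choice of $s$, the terms carrying $\nabla f(y)$ cancel in pairs: the $\langle\nabla f(y),x-y\rangle$ coming from the $f$-bound cancels the matching term in $\langle s,x-y^+\rangle$, and similarly for $t\langle\nabla f(y),\mathcal{G}(y,t)\rangle$. What remains is $t\|\mathcal{G}(y,t)\|^2-\tfrac{L}{2}t^2\|\mathcal{G}(y,t)\|^2+\langle\mathcal{G}(y,t),x-y\rangle+\tfrac{\mu}{2}\|x-y\|^2$, which is exactly the asserted right-hand side $t(1-\tfrac{L}{2}t)\|\mathcal{G}(y,t)\|^2+\langle\mathcal{G}(y,t),x-y\rangle+\tfrac{\mu}{2}\|x-y\|^2$. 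The special case \eqref{eq:OVGL} then follows simply by setting $t=\tfrac{1}{L}$ and $\mu=0$. Beyond the subgradient identification already flagged, the only care required is sign-tracking when substituting $y^+-y=-t\mathcal{G}(y,t)$ and confirming that the $\nabla f(y)$ cross terms indeed cancel.
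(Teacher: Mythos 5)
Your proof is correct and is essentially the paper's own argument: the paper identifies the same subgradient $g_s=\mathcal{G}(y,t)-\nabla f(y)\in\partial g(y^+)$ via the first-order optimality condition of the prox subproblem (equivalent to your use of Lemma \ref{lem:zero}), and combines the same three inequalities---smoothness of $f$ at $y$ to bound $f(y^+)$, strong convexity of $f$ at $y$ to bound $f(x)$, and the subgradient inequality for $g$ at $y^+$. The only cosmetic difference is that the paper packages the smoothness step through an auxiliary model function $h(x,y)$, whereas you carry out the cancellation directly.
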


\subsection{New and refined results}
For simplicity, we define the updated iterate point by using the superscript "+" as follows:
$$\quad x^+:=\prox_{tg}(x-t\nabla f(x))=x-t\cdot\mathcal{G}_t(x),$$
where the step size $t>0$ is clear from the context.
Using this notation and Lemma \ref{lem:zero}, we immediately  have
$$x-t\nabla f(x)\in x^++ t\partial g(x^+).$$
Thus, there must exist a subgradient  $s^+\in\partial g(x^+)$ such that
\begin{equation}\label{eq:splus}
    x^+=x-t(\nabla f(x)+s^+).
\end{equation}
Now, we are ready to present the first new property of proximal gradient mapping.
\begin{theorem}[Norm monotonicity]\label{lem:PGN}
    Suppose that $f$ and $g$ satisfy properties (A) and (B) of Assumption \ref{assum1}.
    Denote $\rho(t):=\max\{ |1-Lt|, |1-\mu t|\}$. Then,  we have
    \begin{equation}\label{eq:PGN}
        \|\mathcal{G}(x^+,t)\|\leq d(0, \partial \varphi(x^+))\leq\rho(t)\|\mathcal{G}(x,t)\|\leq \rho(t) d(0, \partial \varphi(x)).
    \end{equation}
    In particular, for $f\in\mathcal{F}^{1,1}_{L}(\mathbb{R}^n)$, $g\in\Gamma_0(\mathbb{R}^n)$, and $0< t\leq \frac{2}{L}$, it holds that
    $$\|\mathcal{G}(x^+,t)\|\leq d(0, \partial \varphi(x^+))\leq \|\mathcal{G}(x,t)\| \leq d(0, \partial \varphi(x)).$$
\end{theorem}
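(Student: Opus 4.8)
The plan is to verify the four-term chain in \eqref{eq:PGN} link by link. The outer two links are essentially free: the first inequality $\|\mathcal{G}(x^+,t)\|\le d(0,\partial\varphi(x^+))$ is just Lemma \ref{lem:UB} applied at the point $x^+$, and the last inequality $\rho(t)\|\mathcal{G}(x,t)\|\le \rho(t)\,d(0,\partial\varphi(x))$ is Lemma \ref{lem:UB} at $x$ multiplied by the nonnegative scalar $\rho(t)$. Hence everything reduces to the middle inequality $d(0,\partial\varphi(x^+))\le\rho(t)\|\mathcal{G}(x,t)\|$, and this is where the real work lies.

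For the middle inequality I would first exhibit a single explicit element of $\partial\varphi(x^+)$ whose norm I can control. Since $f$ is smooth, $\partial\varphi(x^+)=\nabla f(x^+)+\partial g(x^+)$, so with the subgradient $s^+\in\partial g(x^+)$ furnished by \eqref{eq:splus} the vector $\nabla f(x^+)+s^+$ lies in $\partial\varphi(x^+)$, and therefore $d(0,\partial\varphi(x^+))\le\|\nabla f(x^+)+s^+\|$. Reading off $s^+=\mathcal{G}(x,t)-\nabla f(x)$ from \eqref{eq:splus} together with $x^+-x=-t\mathcal{G}(x,t)$, I would rewrite
\begin{equation*}
    t\bigl(\nabla f(x^+)+s^+\bigr)=(x-x^+)-t\bigl(\nabla f(x)-\nabla f(x^+)\bigr),
\end{equation*}
so that the middle inequality becomes equivalent to the contraction estimate $\|(x-x^+)-t(\nabla f(x)-\nabla f(x^+))\|\le\rho(t)\,\|x-x^+\|$. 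In other words, the whole statement reduces to the fact that the gradient-step operator $I-t\nabla f$ is $\rho(t)$-Lipschitz on $\mathcal{S}^{1,1}_{\mu,L}(\mathbb{R}^n)$.

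I expect this Lipschitz estimate to be the main obstacle, and I would prove it by squaring. Writing $a=x$, $b=x^+$, $A=\|a-b\|^2$, $B=\langle a-b,\nabla f(a)-\nabla f(b)\rangle$, and $C=\|\nabla f(a)-\nabla f(b)\|^2$, I have
\begin{equation*}
    \|(a-b)-t(\nabla f(a)-\nabla f(b))\|^2=A-2tB+t^2C.
\end{equation*}
Since $t>0$, I would replace $B$ by its lower bound from \eqref{unif2}, namely $B\ge\frac{\mu L}{\mu+L}A+\frac{1}{\mu+L}C$, to obtain $A-2tB+t^2C\le(1-\frac{2t\mu L}{\mu+L})A+t(t-\frac{2}{\mu+L})C$. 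The residual term carries the coefficient $t(t-\frac{2}{\mu+L})$, whose sign dictates a short case split, handled with the two-sided bound $\mu^2A\le C\le L^2A$ from \eqref{unif4}: when $t\le\frac{2}{\mu+L}$ the coefficient is nonpositive and I bound $C$ below by $\mu^2A$, while when $t\ge\frac{2}{\mu+L}$ it is nonnegative and I bound $C$ above by $L^2A$. The delicate point is that the algebra must collapse exactly; a direct computation shows the two cases produce precisely $(1-\mu t)^2A$ and $(1-Lt)^2A$ respectively, so in either case $A-2tB+t^2C\le\rho(t)^2A$, which is the desired contraction.

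Finally, the particular case is immediate: for $f\in\mathcal{F}^{1,1}_L(\mathbb{R}^n)$ one has $\mu=0$, so $\rho(t)=\max\{|1-Lt|,1\}$, and for $0<t\le\frac{2}{L}$ we have $Lt\in(0,2]$, whence $|1-Lt|\le 1$ and thus $\rho(t)=1$. Substituting $\rho(t)=1$ into \eqref{eq:PGN} collapses the chain to the claimed nonincreasing statement.
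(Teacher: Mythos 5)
Your proposal is correct and takes essentially the same route as the paper's own proof: both reduce the middle inequality to bounding $\|\nabla f(x^+)+s^+\|$ via Lemma \ref{lem:UB} and the representation \eqref{eq:splus}, then expand the square, invoke \eqref{unif2} to handle the cross term, and finish with the same case split on the sign of $t-\frac{2}{\mu+L}$ using the two-sided bound \eqref{unif4}, with the algebra collapsing to $(1-Lt)^2$ and $(1-\mu t)^2$ exactly as in the paper. The only difference is presentational---you package the identical computation as the statement that $I-t\nabla f$ is $\rho(t)$-Lipschitz, which is a nice conceptual gloss but not a different argument.
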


\begin{proof}
    The inequalities $\|\mathcal{G}(x^+,t)\|\leq d(0, \partial \varphi(x^+))$ and $\rho(t)\|\mathcal{G}(x,t)\|\leq \rho(t) d(0, \partial \varphi(x))$ directly follow from Lemma \ref{lem:UB}. To show the relationship (\ref{eq:PGN}), it suffices to show that
    \begin{equation}\label{eq:Mineq}
        d(0, \partial \varphi(x^+))\leq\rho(t)\|\mathcal{G}(x,t)\|.
    \end{equation}
    Since $s^+ \in \partial g(x^+)$, we have  $d(0,\partial \varphi(x^+)) \le \| \nabla f(x^+) + s^+\|$.  Therefore, if we can show that
    \begin{equation}\label{eq:Mineq2}
        \|\nabla f(x^+)+s^+\|^2\leq \rho^2(t)\|\mathcal{G}(x,t)\|^2,
    \end{equation}
    then the desired inequality \eqref{eq:Mineq} follows immediately. Using he expression $x^+=x-t(\nabla f(x)+s^+)$ in \eqref{eq:splus}, we derive that
    \begin{align*}
             & \|\nabla f(x^+)+s^+\|^2                                                                                                                                            \\
        =    & \|\nabla f(x)+s^+ +\nabla f(x^+)-\nabla f(x)\|^2                                                                                                                   \\
        =    & \|\nabla f(x)+s^+\|^2+2\left\langle \nabla f(x)+s^+, \nabla f(x^+)-\nabla f(x) \right\rangle +  \|\nabla f(x^+)-\nabla f(x)\|^2                                    \\
        =    & \frac{1}{t^2}\|x^+-x\|^2-\frac{2}{t} \left\langle x^+-x, \nabla f(x^+)-\nabla f(x) \right\rangle +  \|\nabla f(x^+)-\nabla f(x)\|^2                                \\
        \leq & \frac{1}{t^2}\|x^+-x\|^2-\frac{2}{t}\left(\frac{\mu L}{\mu+ L}\|x^+-x\|^2+\frac{1}{\mu+L}\|\nabla f(x^+)-\nabla f(x)\|^2\right) +  \|\nabla f(x^+)-\nabla f(x)\|^2 \\
        =    & \frac{1}{t^2}\left[(1-\frac{2t\mu L}{\mu+ L})\|x^+-x\|^2 + t(t- \frac{2}{\mu+L})\|\nabla f(x^+)-\nabla f(x)\|^2 \right],
    \end{align*}
    where the inequality follows from \eqref{unif2} in Fact \ref{fac:ML}. In order to bound $\|\nabla f(x^+)-\nabla f(x)\|^2$ in terms of $\|x^+-x\|^2$, we use \eqref{unif4} in Fact \ref{fac:ML} to get
    $$\mu^2 \|x^+-x\|^2\leq \|\nabla f(x^+)-\nabla f(x)\|^2\leq L^2\|x^+-x\|^2.$$
    If $t- \frac{2}{\mu+L}\geq 0$, then we have
    $$(t- \frac{2}{\mu+L})\|\nabla f(x^+)-\nabla f(x)\|^2\leq L^2(t- \frac{2}{\mu+L})\|x^+-x\|^2.$$
    If $t- \frac{2}{\mu+L}<0$, then we have
    $$(t- \frac{2}{\mu+L})\|\nabla f(x^+)-\nabla f(x)\|^2\leq \mu^2(t- \frac{2}{\mu+L})\|x^+-x\|^2.$$
    In both cases, we always have that
    $$(t- \frac{2}{\mu+L})\|\nabla f(x^+)-\nabla f(x)\|^2\leq \max\left\{L^2(t- \frac{2}{\mu+L}), \mu^2(t- \frac{2}{\mu+L})\right\}\|x^+-x \|^2.$$
    Therefore, we can continue to derive that
    \begin{align*}
             & \|\nabla f(x^+)+s^+\|^2                                                                                                                        \\
        \leq & \frac{1}{t^2}\left[(1-\frac{2t\mu L}{\mu+ L})\|x^+-x\|^2 + t\max\left\{L^2(t- \frac{2}{\mu+L}), \mu^2(t- \frac{2}{\mu+L})\right\}\|x^+-x \|^2 \right]     \\
        =    & \frac{1}{t^2}\max\left\{1-\frac{2t\mu L}{\mu+ L} + tL^2(t- \frac{2}{\mu+L}), 1-\frac{2t\mu L}{\mu+ L}  +  t\mu^2(t- \frac{2}{\mu+L}) \right\}\|x^+-x \|^2 \\
        =    & \frac{1}{t^2} \max\{ (1-Lt)^2, (1-\mu t)^2\}\|x^+-x \|^2                                                                                       \\
        =    & \rho^2(t)\|\mathcal{G}(x,t)\|^2,
    \end{align*}
    from which the inequality \eqref{eq:Mineq2} follows. This completes the proof.
\end{proof}
\begin{remark}
    Here, the factor $\rho(t)$ is optimal; otherwise, it will contradict the following exact worst-case convergence rate, which was recently established in \cite{Taylor2018}:
    $$\|\nabla f(x^+)+s^+\|\leq \rho(t)\|\nabla f(x)+s\|, ~~\forall s\in \partial g(x).$$
    In fact, the inequality above is equivalent to
    $$\|\nabla f(x^+)+s^+\|\leq \rho(t)d(0,\partial \varphi(x));$$
    whilst in our proof, we have shown $ \|\nabla f(x^+)+s^+\|\leq \rho(t)\|\mathcal{G}(x,t)\|$ in \eqref{eq:Mineq2} which is a tighter estimation and hence it is impossible to improve.
\end{remark}

Below, we state the second new property of proximal gradient mapping.
\begin{theorem}[Refined descent]\label{lem:SDP}
    Suppose that $f$ and $g$ satisfy properties (A) and (B) of Assumption \ref{assum1}. Then, we have
    \begin{equation}\label{eq:SDP}
        \varphi(x)\geq \varphi(x^+) +\frac{t}{2}\|\mathcal{G}(x,t)\|^2 + \frac{t}{2(1-\mu t)}\|\mathcal{G}(x^+,t)\|^2, 0< t\leq \frac{1}{L}.
    \end{equation}
    In particular,
    \begin{itemize}
        \item for $f \in \mathcal{F}^{1,1}_{L}(\RR^n)$, $g \in \Gamma_0(\RR^n)$, it holds that
              \begin{equation}\label{eq:DP1}
                  \varphi(x)\geq \varphi(x^+) +\frac{t}{2}\|\mathcal{G}(x,t)\|^2 + \frac{t}{2}\|\mathcal{G}(x^+,t)\|^2, 0< t\leq \frac{1}{L}.
              \end{equation}
        \item  for $f \in \mathcal{F}^{1,1}_{L}(\RR^n)$, $g \equiv 0$, it holds that
              \begin{equation}\label{eq:DP2}
                  f(x) \geq f(x^+) +\frac{t}{2}\|\nabla f(x)\|^2 + \frac{t}{2}\|\nabla f(x^+)\|^2, 0< t \leq \frac{1}{L}.
              \end{equation}
    \end{itemize}
\end{theorem}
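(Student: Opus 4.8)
The plan is to establish a slightly stronger inequality than \eqref{eq:SDP}, in which $\|\mathcal{G}(x^+,t)\|$ is replaced by the norm of one convenient subgradient of $\varphi$ at $x^+$, and then to reduce the whole claim to the nonnegativity of a rank-deficient quadratic form. Recall from \eqref{eq:splus} that there is $s^+\in\partial g(x^+)$ with $x^+=x-t(\nabla f(x)+s^+)$, so that $\mathcal{G}(x,t)=\nabla f(x)+s^+$ and $x-x^+=t\,\mathcal{G}(x,t)$. Set $\xi:=\nabla f(x^+)+s^+$. Since $f$ is differentiable, $\xi\in\nabla f(x^+)+\partial g(x^+)=\partial\varphi(x^+)$, and therefore Lemma \ref{lem:UB} gives $\|\mathcal{G}(x^+,t)\|\le d(0,\partial\varphi(x^+))\le\|\xi\|$. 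Because the coefficient $\tfrac{t}{2(1-\mu t)}$ is positive for $0<t\le 1/L$, it suffices to prove
\begin{equation}
\varphi(x)-\varphi(x^+)\ge \frac{t}{2}\|\mathcal{G}(x,t)\|^2+\frac{t}{2(1-\mu t)}\|\xi\|^2. \tag{$\ast$}
\end{equation}
The point of using $\xi$ rather than $\mathcal{G}(x^+,t)$ is that it retains the \emph{same} subgradient $s^+$ at the \emph{same} point $x^+$, which is exactly what the function-value estimates below produce.

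Second, I would split $\varphi=f+g$ and estimate the two differences separately. For $g$, the subgradient inequality at $x^+$ with $s^+\in\partial g(x^+)$ gives $g(x)-g(x^+)\ge\langle s^+,x-x^+\rangle=t\langle s^+,\mathcal{G}(x,t)\rangle$. For $f$, inequality \eqref{unif3} of Fact \ref{fac:ML}, applied with its two arguments equal to $x$ and $x^+$, lower-bounds $f(x)-f(x^+)$ by $\langle\nabla f(x^+),x-x^+\rangle+\tfrac1{2L}\|\nabla f(x)-\nabla f(x^+)\|^2+\tfrac{\mu L}{2(L-\mu)}\|x-x^+-\tfrac1L(\nabla f(x)-\nabla f(x^+))\|^2$. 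Adding the two bounds, the inner-product terms combine as $\langle\nabla f(x^+),x-x^+\rangle+t\langle s^+,\mathcal{G}(x,t)\rangle=t\langle\xi,\mathcal{G}(x,t)\rangle$, and substituting $x-x^+=t\,\mathcal{G}(x,t)$ and $\nabla f(x)-\nabla f(x^+)=\mathcal{G}(x,t)-\xi$ yields
\begin{equation}
\varphi(x)-\varphi(x^+)\ge t\langle\xi,\mathcal{G}(x,t)\rangle+\frac{1}{2L}\|\mathcal{G}(x,t)-\xi\|^2+\frac{\mu L}{2(L-\mu)}\Big\|\big(t-\tfrac1L\big)\mathcal{G}(x,t)+\tfrac1L\xi\Big\|^2.
\end{equation}
It is essential to keep the full last term of \eqref{unif3}; discarding it loses precisely the strong-convexity enhancement encoded in the factor $\tfrac{1}{1-\mu t}$.

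Third, subtracting the right-hand side of $(\ast)$, the claim reduces to showing that the quadratic form $E:=A\|\mathcal{G}(x,t)\|^2+B\|\xi\|^2+C\langle\mathcal{G}(x,t),\xi\rangle$ is nonnegative, where a direct expansion gives $A=\tfrac{(1-Lt)(1-\mu t)}{2(L-\mu)}$, $B=\tfrac{1-Lt}{2(L-\mu)(1-\mu t)}$, and $C=\tfrac{Lt-1}{L-\mu}$. For $0<t\le 1/L$ and $\mu<L$ one has $1-Lt\ge0$ and $1-\mu t>0$, hence $A\ge0$ and $B\ge0$; moreover a short computation yields the key identity $C^2=4AB$. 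Consequently $E=A\big\|\mathcal{G}(x,t)+\tfrac{C}{2A}\xi\big\|^2\ge0$ whenever $A>0$ (the boundary cases being handled by continuity), so $(\ast)$ holds, with equality attainable — confirming that the estimate is tight. The degenerate case $\mu=L$, where \eqref{unif3} is unavailable, would be treated by letting $\mu\uparrow L$ or by the direct quadratic identity for $f(x)=\tfrac L2\|x-c\|^2+\langle b,x\rangle$.

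The special cases follow at once: setting $\mu=0$ makes $\tfrac{1}{1-\mu t}=1$ and gives \eqref{eq:DP1}; taking in addition $g\equiv0$ forces $s^+=0$, so that $\xi=\nabla f(x^+)=\mathcal{G}(x^+,t)$ and the reduction step becomes an equality, yielding \eqref{eq:DP2}. The main obstacle is the bookkeeping of the third step — computing $A,B,C$ and verifying the identity $C^2=4AB$ that makes the quadratic form a perfect square; everything else is a routine combination of the subgradient inequality, Fact \ref{fac:ML}, and Lemma \ref{lem:UB}.
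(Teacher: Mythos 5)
Your proof is correct, and it rests on the same ingredients as the paper's own argument --- the representation \eqref{eq:splus}, the subgradient inequality for $g$ at $x^+$, inequality \eqref{unif3} of Fact \ref{fac:ML} applied to the pair $(x,x^+)$, and Lemma \ref{lem:UB} to pass from $\|\nabla f(x^+)+s^+\|$ to $\|\mathcal{G}(x^+,t)\|$ --- but the finishing move is genuinely different. The paper first notes that $0<t\le 1/L$ gives the inclusion $\mathcal{S}^{1,1}_{\mu,L}(\RR^n)\subset \mathcal{S}^{1,1}_{\mu,t^{-1}}(\RR^n)$ and invokes \eqref{unif3} with $L$ replaced by $t^{-1}$; with that step-size-adapted constant the algebra collapses, by exact completion of squares, into $\frac{1}{2t}\|x-x^+\|^2+\frac{t}{2(1-\mu t)}\|\nabla f(x^+)+s^+\|^2$ with no residual term, so no sign analysis is ever needed. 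You instead keep the original constant $L$, which is a \emph{stronger} inequality on $f$, and must then absorb the surplus into the quadratic form $E=A\|\mathcal{G}(x,t)\|^2+B\|\xi\|^2+C\langle \mathcal{G}(x,t),\xi\rangle$. I checked your coefficients: indeed $A=\frac{(1-Lt)(1-\mu t)}{2(L-\mu)}$, $B=\frac{1-Lt}{2(L-\mu)(1-\mu t)}$, $C=\frac{Lt-1}{L-\mu}$, and the identity $C^2=4AB$ holds with $C\le 0$, so $E=\bigl\|\sqrt{A}\,\mathcal{G}(x,t)-\sqrt{B}\,\xi\bigr\|^2\ge 0$; moreover, at $t=1/L$ all three coefficients vanish simultaneously, so the boundary case is immediate rather than needing a continuity argument. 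What the paper's rescaling buys is economy: the vanishing discriminant you compute is exactly the algebraic certificate that the weakening $L\mapsto t^{-1}$ discards nothing of value, and by making that weakening up front the paper never has to see it. What your route buys is that it documents this tightness explicitly and dispenses with the class-inclusion observation. One shared caveat: both arguments (and the statement itself, whose right-hand side carries the factor $\tfrac{t}{2(1-\mu t)}$) implicitly require $\mu t<1$, degenerating only when $\mu=L$ and $t=1/L$; your proposed limiting argument $\mu'\uparrow L$ within $\mathcal{S}^{1,1}_{\mu',L}(\RR^n)$ handles that corner correctly.
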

\begin{proof}
    Note that $0<t \leq L^{-1}$ implies $t^{-1}\geq L$ which further implies that the $L$-smooth function must also be  $t^{-1}$-smooth; hence, we can conclude that $$\cS^{1,1}_{\mu,L}(\RR^n) \subset\cS^{1,1}_{\mu, t^{-1}}(\RR^n).$$
    We now use \eqref{unif3} in Fact \ref{fac:ML} with $L=t^{-1}$ and $y=x^+$  to get
    $$f(x)\geq f(x^+)+\left\langle \nabla f(x^+), x-x^+\right\rangle+ \frac{t}{2} \|\nabla f(x)-\nabla f(x^+)\|^2+\frac{\mu }{2(1-\mu t)}\|x-x^+-t(\nabla f(x)-\nabla f(x^+))\|^2.$$
    The subgradient inequality of $g$ gives $g(x)\geq g(x^+)+\left\langle s^+, x-x^+\right\rangle$ since $s^+ \in \partial g(x^+)$. Adding up these two inequalities, we derive that
    \begin{align*}
        \varphi(x)\geq & \varphi(x^+)+\left\langle \nabla f(x^+)+s^+, x-x^+\right\rangle+ \frac{t}{2} \|\nabla f(x)-\nabla f(x^+)\|^2                 \\
                       & +\frac{\mu }{2(1-\mu t)}\|x-x^+-t(\nabla f(x)-\nabla f(x^+))\|^2                                                             \\
        =              & \varphi(x^+)+\left\langle \nabla f(x)+s^+, x-x^+\right\rangle - \left\langle \nabla f(x^+)- \nabla f(x), x^+ -x\right\rangle \\
                       & + \frac{t}{2} \|\nabla f(x)-\nabla f(x^+)\|^2 +\frac{\mu }{2(1-\mu t)}\|x-x^+-t(\nabla f(x)-\nabla f(x^+))\|^2
    \end{align*}
    Using the expression $x^+=x-t(\nabla f(x)+s^+)$ in \eqref{eq:splus}, we can further derive that
    \begin{align*}
        \varphi(x) \geq & \varphi(x^+)+\frac{1}{t}\|x-x^+\|^2 - \left\langle \nabla f(x^+)- \nabla f(x), x^+ -x\right\rangle \\
                        & + \frac{t}{2} \|\nabla f(x)-\nabla f(x^+)\|^2 +\frac{\mu t^2}{2(1-\mu t)}\|s^+ + \nabla f(x^+)\|^2 \\
        =               & \varphi(x^+) + \frac{1}{2t}\|t(\nabla f(x^+)- \nabla f(x))-x^+ +x\|^2                              \\
                        & +\frac{1}{2t}\|x-x^+\|^2 +\frac{\mu t^2}{2(1-\mu t)}\|s^+ + \nabla f(x^+)\|^2                      \\
        =               & \varphi(x^+)+\frac{1}{2t}\|x-x^+\|^2  +\frac{t}{2(1-\mu t)}\|s^+ + \nabla f(x^+)\|^2.
    \end{align*}
    Note that $x-x^+=t \mathcal{G}(x,t)$ and use the fact that
    $$\|s^+ + \nabla f(x^+)\|\geq d(0, \partial \varphi(x^+))\geq \|\mathcal{G}(x^+,t)\|.$$
    We finally obtain
    $$\varphi(x)\geq \varphi(x^+) +\frac{t}{2}\|\mathcal{G}(x,t)\|^2 + \frac{t}{2(1-\mu t)}\|\mathcal{G}(x^+,t)\|^2.$$
    This completes the proof.
\end{proof}

\begin{remark} We make a few remarks:
    \begin{itemize}
        \item    In \cite{nesterov2004introductory}, for $\varphi = f+ g$ with $f \in \cS^{1,1}_{\mu,L}(\RR^n)$ and $g$ being the indicator function of a set $Q$, the descent lemma of the projected gradient method can be stated as
              \begin{equation}\label{eq:compare1}
                  \varphi(x) \geq \varphi(x^+) +\frac{t}{2}\|g_Q(x,t)\|^2, 0 < t \le \frac{1}{L}.
              \end{equation}
              where $g_Q(x, t) := t^{-1}(x-x^+)$ is the gradient mapping of $f$ on $Q$. In \cite{Beck2009}, for $\varphi = f+g$ with $f  \in \mathcal{F}^{1,1}_{L}(\RR^n)$ and $g \in \Gamma_0(\RR^n)$, the corresponding descent lemma of the proximal gradient method is
              \begin{equation}\label{eq:compare2}
                  \varphi(x) \geq \varphi(x^+) +\frac{L}{2}\|x^+ - x\|^2.
              \end{equation}
              It is not hard to see that our result improves these existing descent lemmas.

        \item   At the time of this paper was under preparation, we noticed that the special case \eqref{eq:DP1} was implicitly rediscovered by combining Lemma 9 and Lemma 11 in \cite{2022Teboulle}.
    \end{itemize}

\end{remark}

\section{Small norm of proximal gradient mapping}
In this section, we aim to extend the potential function-based framework previously reviewed from gradient descent to proximal gradient descent and its acceleration.

\subsection{Proximal gradient descent}
The following result is a direct extension of Lemma 2.1 in \cite{Jelena2022}. However, its proof relies on the new properties of proximal gradient mapping in the last section.

\begin{theorem}\label{MNG}
    Suppose that Assumption \ref{assum1} holds. Let $x^0$ be an arbitrary initial point and assume that $x^{k+1}=x^k-t_k\mathcal{G}(x^k,t_k)$ with constant step sizes $t_k\equiv\frac{\eta}{L}$ for some $0<\eta\leq 1$. Then
    $$\mathcal{C}_k:=\frac{\eta}{L}\cdot k\|\mathcal{G}(x^k,\frac{\eta}{L})\|^2+\varphi(x^k)-\bar{\varphi}$$
    is nonincreasing with $k$, and the norm sequence of proximal gradient mappings converges sublinearly in the sense that $\forall k\geq0$
    $$\|\mathcal{G}(x^k,\frac{\eta}{L})\|\leq \frac{L(\varphi(x^0)-\bar{\varphi})}{\eta k}.$$
\end{theorem}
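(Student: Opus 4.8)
The plan is to establish the monotonicity of $\mathcal{C}_k$ by combining the two new properties of Section~3, and then to read off the rate by a one-step telescoping. Throughout, write $t=\eta/L$ and abbreviate $g_k:=\mathcal{G}(x^k,t)$, so the iteration reads $x^{k+1}=x^k-tg_k=(x^k)^+$. Since $0<\eta\le 1$ gives $0<t\le 1/L$, the refined descent inequality \eqref{eq:SDP} applies at every step; moreover, because $\mu\ge 0$ forces $1-\mu t\le 1$ and hence $\tfrac{t}{2(1-\mu t)}\ge\tfrac{t}{2}$, it yields the clean bound
$$\varphi(x^k)-\varphi(x^{k+1})\ge \frac{t}{2}\|g_k\|^2+\frac{t}{2}\|g_{k+1}\|^2.$$

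First I would expand the one-step difference of the potential. Using the displayed bound to control $\varphi(x^{k+1})-\varphi(x^k)$, a direct computation gives
$$\mathcal{C}_{k+1}-\mathcal{C}_k=t(k+1)\|g_{k+1}\|^2-tk\|g_k\|^2+\varphi(x^{k+1})-\varphi(x^k)\le t\Big(k+\tfrac{1}{2}\Big)\big(\|g_{k+1}\|^2-\|g_k\|^2\big).$$
The crucial observation is that the coefficient $t(k+\tfrac12)$ is nonnegative, so to conclude $\mathcal{C}_{k+1}\le\mathcal{C}_k$ it suffices that the bracketed difference of squared norms be nonpositive. This is exactly where the norm monotonicity of Theorem~\ref{lem:PGN} enters: since $\mu\le L$ (from \eqref{unif4}) and $t=\eta/L\le 1/L$, one checks $\rho(t)=\max\{1-\eta,\,1-\mu\eta/L\}\le 1$, whence $\|g_{k+1}\|=\|\mathcal{G}((x^k)^+,t)\|\le\rho(t)\|g_k\|\le\|g_k\|$. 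Thus $\|g_{k+1}\|^2-\|g_k\|^2\le 0$ and $\mathcal{C}_k$ is nonincreasing.

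For the rate, I would combine the monotonicity $\mathcal{C}_k\le\mathcal{C}_0=\varphi(x^0)-\bar\varphi$ with the nonnegativity $\varphi(x^k)-\bar\varphi\ge 0$, valid since $\bar\varphi$ is the optimal value of \eqref{eg:CO}. Discarding the nonnegative function-value term in $\mathcal{C}_k$ leaves $tk\|g_k\|^2\le\mathcal{C}_k\le\varphi(x^0)-\bar\varphi$, and substituting $t=\eta/L$ yields $\|\mathcal{G}(x^k,\eta/L)\|^2\le L(\varphi(x^0)-\bar\varphi)/(\eta k)$, the asserted sublinear decay of the proximal gradient mapping.

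The main obstacle—or rather the conceptual heart of the argument—is the exact cancellation in the one-step difference: without the refined descent lemma the two squared-norm terms would not align to produce the single factor $t(k+\tfrac12)$, and without norm monotonicity the residual $\|g_{k+1}\|^2-\|g_k\|^2$ could be positive and destroy the monotonicity of $\mathcal{C}_k$. It is precisely the interplay of these two Section~3 properties, rather than any single routine estimate, that makes the potential function work; the remaining steps are then straightforward bookkeeping.
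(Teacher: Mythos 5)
Your proposal is correct and follows essentially the same route as the paper's own proof: expand $\mathcal{C}_{k+1}-\mathcal{C}_k$, bound the function-value change via the refined descent of Theorem~\ref{lem:SDP} to obtain the factor $t\left(k+\tfrac{1}{2}\right)\left(\|g_{k+1}\|^2-\|g_k\|^2\right)$, kill that term with the norm monotonicity of Theorem~\ref{lem:PGN}, and then telescope $\mathcal{C}_k\le\mathcal{C}_0$ and drop the nonnegative function-value gap. Your added detail that $\rho(\eta/L)=\max\{1-\eta,\,1-\mu\eta/L\}\le 1$ only makes explicit what the paper invokes implicitly, and your final squared-norm bound coincides with the inequality the paper actually derives.
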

\begin{proof}
    We first show that $\forall k\geq 0$,
    $$\mathcal{C}_{k+1}\leq \mathcal{C}_k.$$
    Using the definition of $\mathcal{C}_k$, we have that
    $$\mathcal{C}_{k+1}-\mathcal{C}_k=\frac{\eta}{L}(k+1)\|\mathcal{G}(x^{k+1},\frac{\eta}{L})\|^2-\frac{\eta k}{L} \|\mathcal{G}(x^k,\frac{\eta}{L})\|^2+\varphi(x^{k+1})-\varphi(x^k).$$
    Using Theorem \ref{lem:SDP} yields
    $$\varphi(x^k)-\varphi(x^{k+1})\geq \frac{\eta}{2L}\|\mathcal{G}(x^{k},\frac{\eta}{L})\|^2+\frac{\eta}{2L}\|\mathcal{G}(x^{k+1},\frac{\eta}{L})\|^2.$$
    Thus,
    $$\mathcal{C}_{k+1}-\mathcal{C}_k\leq \frac{\eta}{L}\left(k+\frac{1}{2}\right)(\| \mathcal{G}(x^{k+1},\frac{\eta}{L})\|^2-\| \mathcal{G}(x^k,\frac{\eta}{L})\|^2).$$
    In addition, using Theorem \ref{lem:PGN} yields
    $$\|\mathcal{G}(x^{k+1},\frac{\eta}{L})\|\leq \|\mathcal{G}(x^k,\frac{\eta}{L})\|,$$
    which leads to the monotonically decreasing $\mathcal{C}_{k+1}\leq \mathcal{C}_k$ and the result
    $$\varphi(x^k)-\bar{\varphi}+\frac{\eta k}{L}\cdot\|\mathcal{G}(x^k,\frac{\eta}{L})\|^2\leq \cdots\leq \mathcal{C}_0=\varphi(x^0)-\bar{\varphi}.$$
    Equivalently,
    $$\frac{\eta k}{L}\|\mathcal{G}(x^k,\frac{\eta}{L})\|^2\leq \varphi(x^0)-\varphi(x^k)\leq \varphi(x^0)-\bar{\varphi},$$
    from which the conclusion follows.
\end{proof}

\subsection{Accelerated norm minimization}
We start with the following iterative scheme which is obtained by replacing the gradient in the fast gradient method \eqref{eq:AG} by the proximal gradient mapping and introducing a new sequence $\{y^k\}$.  For any $k\geq1$,
\begin{equation}\label{eq:APG}\tag{APG}
    \left\{\begin{aligned}
         & y^{k-1}  := x^{k-1}-\frac{1}{L}\cdot\mathcal{G}(x^{k-1},\frac{1}{L}),       \\
         & v^k      := v^{k-1}-\frac{b_{k-1}}{L}\cdot\mathcal{G}(x^{k-1},\frac{1}{L}), \\
         & x^k     := \frac{B_{k-1}}{B_k}y^{k-1}+\frac{b_k}{B_k}v^k,
    \end{aligned}\right.
\end{equation}
where the sequence of scalars $B_k>0$ will be determined later and the sequence of scalars $b_k$ is defined by $b_0=B_0$ and $b_k=B_k-B_{k-1}$ for $k\geq 1$.
For simplicity, we let $\mathcal{G}(x^k)\equiv \mathcal{G}(x^k,\frac{1}{L})$ be the proximal gradient mapping when the step size $t$ equals to $\frac{1}{L}$. Our forthcoming analysis mainly relies on the following potential function: for any $k\geq 0$
\begin{equation}\label{eq:PFL}
    \mathcal{C}_k:=\sum\limits^k_{i=0}a_i\|\mathcal{G}(x^i)\|^2+B_k(\varphi(y^k)-\bar{\varphi}),
\end{equation}
which is inspired by the potential function \eqref{pfun}. However, when zooming into the expression more carefully, the reader can find that it is not obtained by simply replacing the gradient in \eqref{pfun} by the proximal gradient mapping. Actually, we use the function value at $y^k$ rather than at $x^k$ and the sum is from $i=0$ to $k$ rather than to $k-1$. These modifications are pivotal to deduce our desired conclusions.

\begin{lemma}\label{lem:DIP}
    Suppose that Assumption \ref{assum1} holds. Let $x^0$ be an arbitrary initial point with $v^0=x^0$ and assume that
    the sequences of $\{x^k\}$, $\{y^k\}$, and $\{v^k\}$ are generated by the algorithm \eqref{eq:APG}. If the nonnegative scalars $a_k, b_k, B_k$ satisfy that $\forall k\geq1$,
    $$a_k\leq \frac{B_k-b_k^2}{2L},$$
    then we have
    $$\mathcal{C}_k-\mathcal{C}_{k-1}\leq \frac{L}{2}(\| x^*-v^k\|^2-\| x^* -v^{k+1}\|^2), \forall k\geq1,$$
    where $x^* \in X^*$.
\end{lemma}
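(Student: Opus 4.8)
The plan is to estimate the one-step increment $\mathcal{C}_k-\mathcal{C}_{k-1}$ directly and match it term by term against the claimed telescoping bound. First I would expand the difference, using $B_k=B_{k-1}+b_k$ and the fact that only the top term of the sum survives, to write
$$\mathcal{C}_k-\mathcal{C}_{k-1}=a_k\|\mathcal{G}(x^k)\|^2+B_{k-1}\bigl(\varphi(y^k)-\varphi(y^{k-1})\bigr)+b_k\bigl(\varphi(y^k)-\bar{\varphi}\bigr).$$
Since $y^k=x^k-\frac{1}{L}\mathcal{G}(x^k)$ is precisely the $+$-update of $x^k$, both function-value differences are of the type controlled by the convex descent inequality \eqref{eq:OVGL} of Lemma \ref{lem:OVG} applied at the base point $y=x^k$; this is legitimate because $\mathcal{S}^{1,1}_{\mu,L}(\RR^n)\subset\mathcal{F}^{1,1}_L(\RR^n)$.

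Next I would instantiate \eqref{eq:OVGL} twice at $y=x^k$: once with $x=y^{k-1}$ to upper bound $\varphi(y^k)-\varphi(y^{k-1})$, and once with $x=x^*$ (so that $\varphi(x^*)=\bar{\varphi}$) to upper bound $\varphi(y^k)-\bar{\varphi}$. Weighting the first inequality by $B_{k-1}$ and the second by $b_k$, summing, and merging the two resulting quadratic terms via $B_{k-1}+b_k=B_k$, I obtain
$$\mathcal{C}_k-\mathcal{C}_{k-1}\leq\Bigl(a_k-\tfrac{B_k}{2L}\Bigr)\|\mathcal{G}(x^k)\|^2-\bigl\langle\mathcal{G}(x^k),\,B_{k-1}(y^{k-1}-x^k)+b_k(x^*-x^k)\bigr\rangle.$$

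The crux of the argument is the simplification of the inner-product term, and this is the single step where the specific shape of the $x^k$-update is essential. Rewriting the bracket as $B_{k-1}y^{k-1}+b_k x^*-B_k x^k$ and substituting the identity $B_k x^k=B_{k-1}y^{k-1}+b_k v^k$ coming from the third line of \eqref{eq:APG}, the whole expression collapses to $b_k(x^*-v^k)$, so the inner-product term becomes $-b_k\langle\mathcal{G}(x^k),\,x^*-v^k\rangle$. I expect this cancellation to be the main obstacle, in the sense that it is the one place where all three sequences $\{x^k\}$, $\{y^k\}$, $\{v^k\}$ must be tied together correctly; everything before and after is routine.

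Finally I would expand the target right-hand side using the $v$-recursion $v^{k+1}=v^k-\frac{b_k}{L}\mathcal{G}(x^k)$, which gives
$$\frac{L}{2}\bigl(\|x^*-v^k\|^2-\|x^*-v^{k+1}\|^2\bigr)=-b_k\langle\mathcal{G}(x^k),\,x^*-v^k\rangle-\frac{b_k^2}{2L}\|\mathcal{G}(x^k)\|^2.$$
The linear-in-$\mathcal{G}(x^k)$ terms on the two sides then agree exactly, so the desired inequality reduces to the pure quadratic comparison $\bigl(a_k-\tfrac{B_k}{2L}\bigr)\|\mathcal{G}(x^k)\|^2\leq-\tfrac{b_k^2}{2L}\|\mathcal{G}(x^k)\|^2$, that is $a_k\leq\frac{B_k-b_k^2}{2L}$, which is precisely the standing hypothesis. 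This closes the proof.
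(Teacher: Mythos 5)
Your proof is correct and follows essentially the same route as the paper: both expand the increment $\mathcal{C}_k-\mathcal{C}_{k-1}$, apply \eqref{eq:OVGL} at $y=x^k$ with $x=y^{k-1}$ and $x=x^*$, and then exploit the updates $B_kx^k=B_{k-1}y^{k-1}+b_kv^k$ and $v^{k+1}=v^k-\frac{b_k}{L}\mathcal{G}(x^k)$ to reduce everything to the hypothesis $a_k\leq\frac{B_k-b_k^2}{2L}$. The only (cosmetic) difference is bookkeeping order: you collapse the inner-product term via the $x^k$-update before expanding the telescoping distance term, whereas the paper first expands $b_k\langle\mathcal{G}(x^k),x^k-x^*\rangle$ through the mirror-descent three-point identity and lets the residual inner product vanish afterwards.
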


\begin{proof}
    Using the definition of $\mathcal{C}_k$ in (\ref{eq:PFL}), we have that for any $k\geq 1$,
    \begin{equation}\label{eq:gap}
        \mathcal{C}_k-\mathcal{C}_{k-1}\leq a_k\| \mathcal{G}(x^k)\|^2+B_k\varphi(y^k)-B_{k-1}\varphi(y^{k-1})-b_k\bar{\varphi}.
    \end{equation}
    Now, we use (\ref{eq:OVGL}) in Lemma \ref{eq:OVGL} to bound the unknown optimal function value $\bar{\varphi}$. Actually, the inequality (\ref{eq:OVGL}) with $x=x^* $ and $y=x^k$ gives us
    \begin{equation}\label{eq:lowerb}
        \bar{\varphi}=\varphi(x^*)\geq \varphi(y^k)+\frac{1}{2L}\|\mathcal{G}(x^k)\|^2+\left\langle \mathcal{G},x^*-x^k\right\rangle.
    \end{equation}
    Using (\ref{eq:OVGL}) again with $x=y^{k-1}$ and $y=x^k$ leads to
    \begin{equation}\label{eq:ygap}
        \varphi(y^{k-1})-\varphi(y^k)\geq \frac{1}{2L}\| \mathcal{G}(x^k)\|^2+\left\langle \mathcal{G}(x^k),y^{k-1}-x^k\right\rangle.
    \end{equation}
    Combining the three inequalities above, we derive that
\begin{equation}\label{eq:upperb}
    \begin{aligned}
        \mathcal{C}_k -\mathcal{C}_{k-1} \leq & a_k\|\mathcal{G}(x^k)\|^2+B_k\varphi(y^k)-B_{k-1}\varphi(y^{k-1})-b_k\varphi(y^k)-\frac{b_k}{2L}\|\mathcal{G}(x^k)\|^2                                               \\
                                              & -b_k\left\langle \mathcal{G}(x^k),x^*-x^k\right\rangle                                                                                                           \\
        =                                     & a_k\| \mathcal{G}(x^k)\|^2+B_{k-1}(\varphi(y^k)-\varphi(y^{k-1}))-\frac{b_k}{2L}\|\mathcal{G}(x^k)\|^2                                                               \\
                                              & -b_k\left\langle \mathcal{G}(x^k),x^*-x^k\right\rangle                                                                                                           \\
        \leq                                  & a_k\|\mathcal{G}(x^k)\|^2-\frac{B_{k-1}}{2L}\|\mathcal{G}(x^k)\|^2-B_{k-1}\left\langle\mathcal{G}(x^k),y^{k-1}-x^k\right\rangle-\frac{b_k}{2L}\|\mathcal{G}(x^k)\|^2 \\
                                              & +b_k\left\langle\mathcal{G}(x^k),x^k-x^*\right\rangle                                                                                                            \\
        =                                     & \left(a_k-\frac{B_k}{2L}\right)\|\mathcal{G}(x^k)\|^2+B_{k-1}\left\langle \mathcal{G}(x^k),x^k-x^{k-1}+\frac{1}{L}\mathcal{G}(x^{k-1})\right\rangle                  \\
                                              & +b_k\left\langle \mathcal{G}(x^k),x^k-x^*\right\rangle.
    \end{aligned}
    \end{equation}
    In order to get an acceptable upper bound of $\mathcal{C}_k -\mathcal{C}_{k-1}$, we need to estimate the inner product term $\left\langle\mathcal{G}(x^k),x^k-x^*\right\rangle$.
    This can be done by going through the following arguments which are standard in mirror-descent-type analysis. First, note that
    \begin{equation*}
        \begin{aligned}
            v^{k+1} & =\arg\min\limits_u\left\{b_k\left\langle \mathcal{G}(x^k),u-v^k\right\rangle+\frac{L}{2}\| u-v^k\|^2\right\} \\
                    & =v^k-\frac{b_k}{L}\mathcal{G}(x^k).
        \end{aligned}
    \end{equation*}
    Then, we can deduce that
    \begin{equation}\label{eqmidd}
        \begin{aligned}
            b_k\left\langle\mathcal{G}(x^k),x^k -x^*\right\rangle = & b_k\left\langle \mathcal{G}(x^k),x^k-v^{k+1}\right\rangle+L\left\langle v^k-v^{k+1},v^{k+1}-x^*\right\rangle          \\
            =                                                           & b_k\left\langle \mathcal{G}(x^k),x^k-v^k\right\rangle+\frac{b_k^2}{L}\|\mathcal{G}(x^k)\|^2+\frac{L}{2}\| x^*-v^k\|^2 \\
                                                                        & -\frac{L}{2}\| x^*-v^{k+1}\|^2-\frac{L}{2}\| v^{k+1}-v^k\|^2                                                          \\
            =                                                           & b_k\left\langle \mathcal{G}(x^k),x^k-v^k\right\rangle+\frac{b_k^2}{2L}\|\mathcal{G}(x^k)\|^2                              \\
                                                                        & +\frac{L}{2}\| x^*-v^k\|^2  -\frac{L}{2}\| x^*-v^{k+1}\|^2,
        \end{aligned}
    \end{equation}
    where the relationship $v^{k+1}:= v^{k}-\frac{b_{k}}{L}\cdot\mathcal{G}(x^{k},\frac{1}{L})$ have been repeatedly used.
    Now, combining \eqref{eqmidd}  and (\ref{eq:upperb}), we can get
    \begin{equation}\label{mormid}
        \begin{aligned}
            \mathcal{C}_k-\mathcal{C}_{k-1}  \leq & \left(a_k-\frac{B_k-b_k^2}{2L}\right)\|\mathcal{G}(x^k)\|^2 +\frac{L}{2}\| x^*-v^k\|^2-\frac{L}{2}\| x^*-v^{k-1}\|^2 \\
                                                  & +\left\langle \mathcal{G}(x^k),B_kx^k-B_{k-1}(x^{k-1}-\frac{1}{L}\mathcal{G}(x^{k-1}))-b_kv^k\right\rangle.
        \end{aligned}
    \end{equation}
    Note that
    $$B_kx^k-B_{k-1}\left(x^{k-1}-\frac{1}{L}\mathcal{G}(x^{k-1})\right)-b_kv^k=B_kx^k-B_{k-1}y^{k-1}-b_kv^k=0.$$
    The inner product term \eqref{mormid} disappears and hence using the condition $a_k\leq \frac{B_k-b_k^2}{2L}$ we  finally obtain
    $$\mathcal{C}_k-\mathcal{C}_{k-1} \leq \frac{L}{2}\| x^*-v^k\|^2-\frac{L}{2}\| x^*-v^{k+1}\|^2.$$
    This completes the proof.
\end{proof}

Now, we are ready to present the accelerated convergence of proximal gradient mapping.
\begin{theorem}\label{th:CAPG}
    Suppose that the assumption in \cref{lem:DIP} holds. Denote
    $$\tilde{\mathcal{C}}:=a_0\|\mathcal{G}(x^0)\|^2+b_0(\varphi(y^0)-\bar{\varphi})+\frac{L}{2}\| x^*-v^0\|^2.$$
    Then, we have
    \begin{equation}\label{objvalue}
        \varphi(y^k)-\bar{\varphi}\leq \frac{\tilde{\mathcal{C}}}{B_k},       k\geq 1,
    \end{equation}
    \begin{equation}\label{gradnorm}
        \sum\limits^k_{i=0}a_i\| \mathcal{G}(x^i)\|^2\leq \tilde{\mathcal{C}},  k\geq 1.
    \end{equation}
    In particular, if $b_k=\frac{1}{4}(k+1)$, $B_k=\frac{1}{8}(k+1)(k+2)$, $a_k=\frac{1}{32L}(k+1)^2$ for $k\geq 1$, then
    \begin{equation}\label{eq:1}
        \varphi(y^k)-\bar{\varphi}  \leq \frac{8\tilde{\mathcal{C}}}{(k+1)(k+2)},
    \end{equation}
    and
    \begin{equation}\label{eq:2}
        \min\limits_{0\leq i\leq k}\|\mathcal{G}(x^i)\|^2\leq \frac{192L\tilde{\mathcal{C}}}{(k+1)(k+2)(k+3)}.
    \end{equation}
\end{theorem}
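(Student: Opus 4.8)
The plan is to obtain both general bounds \eqref{objvalue} and \eqref{gradnorm} from a single telescoping of the one-step estimate in \cref{lem:DIP}, and then to specialize the parameters. First I would sum the inequality $\mathcal{C}_k-\mathcal{C}_{k-1}\leq \frac{L}{2}(\|x^*-v^{k-1}\|^2-\|x^*-v^k\|^2)$ supplied by \cref{lem:DIP} over the index $k$, so that the distance terms telescope; discarding the nonpositive terminal distance leaves only $\frac{L}{2}\|x^*-v^0\|^2$ on the right. Because $b_0=B_0$, the base potential is $\mathcal{C}_0=a_0\|\mathcal{G}(x^0)\|^2+b_0(\varphi(y^0)-\bar{\varphi})$, and adding $\frac{L}{2}\|x^*-v^0\|^2$ reproduces exactly $\tilde{\mathcal{C}}$. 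Hence the telescoping gives $\mathcal{C}_k\leq \tilde{\mathcal{C}}$ for every $k\geq 1$.

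The two bounds then follow by exploiting the nonnegativity of the two blocks of $\mathcal{C}_k=\sum_{i=0}^k a_i\|\mathcal{G}(x^i)\|^2+B_k(\varphi(y^k)-\bar{\varphi})$. Dropping the nonnegative sum $\sum_{i=0}^k a_i\|\mathcal{G}(x^i)\|^2$ yields $B_k(\varphi(y^k)-\bar{\varphi})\leq \tilde{\mathcal{C}}$, which is \eqref{objvalue} after dividing by $B_k$; here I use $\varphi(y^k)\geq \bar{\varphi}$, valid since $\bar{\varphi}$ is the optimal value. Symmetrically, dropping the nonnegative term $B_k(\varphi(y^k)-\bar{\varphi})$ gives \eqref{gradnorm}.

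For the explicit parameters I would first verify admissibility, namely $b_0=B_0$, the relation $b_k=B_k-B_{k-1}$ for $k\geq 1$, and the hypothesis $a_k\leq \frac{B_k-b_k^2}{2L}$ of \cref{lem:DIP}. A short computation gives $B_k-b_k^2=\frac{1}{16}(k+1)(k+3)$, so that condition reduces to $(k+1)^2\leq (k+1)(k+3)$, which is immediate. Substituting $B_k=\frac{1}{8}(k+1)(k+2)$ into \eqref{objvalue} produces \eqref{eq:1} at once.

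The estimate \eqref{eq:2} is the only step that is more than bookkeeping, and I expect it to be the main obstacle, albeit a modest one. Since the accelerated iterates $x^k$ are not successive proximal-gradient updates of one another, I cannot appeal to the norm monotonicity of \cref{lem:PGN} to transfer the bound to the last iterate; instead I would use the weights $a_i$ as a handle. Replacing each $\|\mathcal{G}(x^i)\|^2$ in \eqref{gradnorm} by $\min_{0\leq j\leq k}\|\mathcal{G}(x^j)\|^2$ gives $\left(\min_{0\leq i\leq k}\|\mathcal{G}(x^i)\|^2\right)\sum_{i=0}^k a_i\leq \tilde{\mathcal{C}}$, reducing the task to lower-bounding $\sum_{i=0}^k a_i$. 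Using $\sum_{i=0}^k (i+1)^2=\frac{(k+1)(k+2)(2k+3)}{6}$ I get $\sum_{i=0}^k a_i=\frac{(k+1)(k+2)(2k+3)}{192 L}$, hence $\min_{0\leq i\leq k}\|\mathcal{G}(x^i)\|^2\leq \frac{192 L\tilde{\mathcal{C}}}{(k+1)(k+2)(2k+3)}$, and the claimed \eqref{eq:2} follows after the harmless loosening $2k+3\geq k+3$.
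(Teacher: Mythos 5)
Your proposal is correct and follows essentially the same route as the paper's own proof: telescope the one-step bound of Lemma \ref{lem:DIP} to get $\mathcal{C}_k\leq\tilde{\mathcal{C}}$, drop the respective nonnegative block of the potential to obtain \eqref{objvalue} and \eqref{gradnorm}, verify $a_k\leq \frac{B_k-b_k^2}{2L}$ for the explicit parameters, and bound the minimum by the $a_i$-weighted average of $\|\mathcal{G}(x^i)\|^2$, ending with the loosening $2k+3\geq k+3$ (which the paper leaves implicit). The only discrepancy is that you quote the lemma's inequality with indices $v^{k-1},v^k$ whereas the lemma as stated has $v^k,v^{k+1}$; your shifted form is exactly what the paper's own proof of the theorem uses when it telescopes down to $\frac{L}{2}\|x^*-v^0\|^2$, so this is a shared indexing quirk rather than a gap in your argument.
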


\begin{proof}
    Using Lemma \ref{lem:DIP} and the definition $\cC_k$, we have
    \begin{equation}
        \begin{aligned}
            \mathcal{C}_k & \leq \mathcal{C}_0+\frac{L}{2}\| x^* -v^0\|^2-\frac{L}{2}\| x^* -v^{k+1}\|^2          \\
                          & \leq a_0\|\mathcal{G}(x^0)\|^2+B_0(\varphi(y^0)-\bar{\varphi})+\frac{L}{2}\| x^* -v^0\|^2 \\
                          & =\tilde{\mathcal{C}}.
        \end{aligned}
    \end{equation}
    Note that each term in $\cC_k$ is nonnegative. Thus, for any $k\geq 1$ we can get
    $$B_k(\varphi(y^k)-\bar{\varphi})\leq \mathcal{C}_k\leq\tilde{\mathcal{C}}$$
    and
    $$\sum\limits^k_{i=0}a_i\|\mathcal{G}(x^i)\|^2\leq \mathcal{C}_k\leq \tilde{\mathcal{C}},$$
    from which the first part follows.

    As for the second part, we first show that the condition  $a_k\leq \frac{B_k-b_k^2}{2L}$ can be verified by the current setting $b_k=\frac{1}{4}(k+1)$, $B_k=\frac{1}{8}(k+1)(k+2)$, and $a_k=\frac{1}{32L}(k+1)^2$. In fact,
    \begin{equation*}
        \begin{aligned}
            \frac{B_k-b_k^2}{2L} & =\frac{1}{2L}\left[\frac{1}{8}(k+1)^2+\frac{1}{8}(k+1)-\frac{1}{16}(k+1)^2\right] \\
                                 & \geq \frac{1}{2L}\cdot\frac{1}{16}(k+1)^2                                         \\
                                 & =a_k.
        \end{aligned}
    \end{equation*}
    Now, summing $a_i$ from $i=0$ to $i=k$, we obtain
    $$\sum\limits^k_{i=0}a_i=\sum^k_{i=0}\frac{1}{32L}(i+1)^2=\frac{(k+1)(k+2)(2k+3)}{192L}.$$
    Therefore, combining with \eqref{gradnorm} in the first part, we finally get
    \begin{equation*}
        \min\limits_{0\leq i\leq k}\| \mathcal{G}(x^i)\|^2  \leq \frac{\sum\limits^k_{i=0}a_i\|\mathcal{G}(x^i)\|^2}{\sum\limits^k_{i=0}a_i}  \leq \frac{192L\cdot\tilde{\mathcal{C}}}{(k+1)(k+2)(2k+3)},
    \end{equation*}
    which completes the proof.
\end{proof}
\section{Concluding remarks}
In this paper, we successfully extended the potential function-based framework in \cite{Jelena2022} from gradient descent to proximal gradient descent, with the help of two newly discovered properties on the proximal gradient mapping. However, the modulus of strong convexity has not yet been exploited in the current potential function-based framework to provide linear convergence guarantees for the norm of gradient or proximal gradient mapping; we would like to leave it as future work.

\section*{Acknowledgements}
This work is supported by the National Science Foundation of China (Nos.11971480).

\section*{Appendix: The missing proofs}

\noindent {\bf The proof of Lemma \ref{lem:zero}}:
Using the definition of the proximal mapping yields
$$z=\prox_{tg}(y)=\arg\min\limits_x\{tg(x)+\frac{1}{2}\| x-y\|^2\}.$$
Based on the first-order optimality condition, we have
$$0\in t\cdot\partial g(z)+z-y.$$
Hence, the relationship $y\in(I+t\cdot\partial g)(z)$ follows. This completes the proof.

\bigskip

\noindent {\bf The proof of Lemma \ref{lem:UB}}:
Take a subgradient $s\in\partial\varphi(x)=\partial g(x)+\nabla f(x)$; then, it holds that
$$x-t\nabla f(x)+ts\in(I+t\partial g)(x).$$
Hence, from Lemma \ref{lem:zero} we have
$$x=\prox_{tg}(x-t\nabla f(x)+ts).$$
Using the nonexpansive property of proximal mapping, for any $s\in\partial \varphi(x)$ we have
\begin{equation*}
    \begin{aligned}
        t\|\mathcal{G}(x,t)\| & =\| x-\prox_{tg}(x-t\nabla f(x))\|                             \\
                              & =\| \prox_{tg}(x-t\nabla f(x)+ts)-\prox_{tg}(x-t\nabla f(x))\| \\
                              & \leq t\| s\|,~~\forall s\in\partial\varphi(x),
    \end{aligned}
\end{equation*}
from which the upper bound (\ref{eq:UB}) follows. This completes the proof.

\bigskip

\noindent {\bf The proof of Lemma \ref{lem:OVG}}:
First of all, we define the following auxiliary function
$$h(x,y):=g(x)+f(y)+\left\langle\nabla f(y),x-y\right\rangle+\frac{1}{2t}\| x-y\|^2.$$
Denote
\begin{equation}\label{eq:upy}\tag{A.1}
    y^+:=\arg\min\limits_xh(x,y).
\end{equation}
Then, one can verify that
$$y^+=y-t\mathcal{G}(y,t).$$
Applying the $L$-smoothness of $f$ in (\ref{eq:Lsmooth}), we obtain
\begin{equation*}
    \begin{aligned}
        \varphi(x) & =f(x)+g(x)\leq h(x,y)+(\frac{L}{2}-\frac{1}{2t})\| x-y\|^2.
    \end{aligned}
\end{equation*}
Plugging $x=y^+$ in the above equation, we get
$$\varphi(y^+)\leq h(y^+,y)+(\frac{L}{2}-\frac{1}{2t})\| y^+-y\|^2,$$
or equivalently,
\begin{equation}\label{eq:vg}\tag{A.2}
    \varphi(x)-\varphi(y^+)\geq \varphi(x)-h(y^+,y)-(\frac{L}{2}-\frac{1}{2t})\| y^+-y\|^2.
\end{equation}
Due to the optimality condition of (\ref{eq:upy}), there must exist a subgradient $g_s\in\partial g(y^+)$ such that
\begin{equation}\label{eq:zero}\tag{A.3}
    0=g_s+\nabla f(y)+\frac{1}{t}(y^+-y).
\end{equation}
Invoking the subgradient inequality for $g$ and the $\mu$-strong convexity for $f$, we have
\begin{equation*}
    \begin{aligned}
        f(x) & \geq f(y)+\left\langle\nabla f(y),x-y\right\rangle+\frac{\mu}{2}\|x-y\|^2, \\
        g(x) & \geq g(y^+)+\left\langle g_s,x-y^+\right\rangle.
    \end{aligned}
\end{equation*}
Adding these two inequalities together, we get
\begin{equation}\label{eq:lb}\tag{A.4}
    \varphi(x)\geq f(y)+g(y^+)+\left\langle\nabla f(y),x-y\right\rangle+\left\langle g_s,x-y^+\right\rangle+\frac{\mu}{2}\|x-y\|^2.
\end{equation}
On the other hand,
\begin{equation*}
    h(y^+,y)=g(y^+)+f(y)+\left\langle y^+-y,\nabla f(y)\right\rangle+\frac{1}{2t}\| y^+-y\|^2.
\end{equation*}
Combining the preceding equation with (\ref{eq:vg}) and (\ref{eq:lb}), we finally get
\begin{equation*}
    \begin{aligned}
        \varphi(x)-\varphi(y^+) & \geq \varphi(x)-h(y^+,y)-(\frac{L}{2}-\frac{1}{2t})\| y^+-y\|^2                                                                                                                       \\
                                & \geq -\frac{1}{2t}\| y^+-y\|^2+\left\langle x-y^+,\nabla f(y)+y_s\right\rangle-(\frac{L}{2}-\frac{1}{2t})\| y^+-y\|^2 +\frac{\mu}{2}\|x-y\|^2                                         \\
                                & \mathop{=}\limits^{\text{\eqref{eq:zero}}} -\frac{1}{2t}\| y^+-y\|^2+\frac{1}{t}\left\langle  y-y^+,x-y^+\right\rangle-(\frac{L}{2}-\frac{1}{2t})\| y^+-y\|^2+\frac{\mu}{2}\|x-y\|^2.
    \end{aligned}
\end{equation*}
The desired conclusion follows  by substituting $\mathcal{G}(y,t)=t^{-1}(y-y^+)$ into the above relationship. This completes the proof.

\small
\bibliographystyle{spmpsci} 

\end{document}